\newtheorem{theorem}{Theorem}[section]
\newtheorem{corollary}[theorem]{Corollary}
\newtheorem{proposition}[theorem]{Proposition}
\theoremstyle{definition}
\newtheorem{definition}[theorem]{Definition}
\numberwithin{equation}{section}
\begin{document}

\title[]{Distributions of polynomials in Gaussian random variables
under structural constraints}

\author{Egor D. Kosov}

\begin{abstract}
We study the regularity of densities of distributions
that are polynomial images of the standard Gaussian measure on $\mathbb{R}^n$.
We assume that the degree of a polynomial is fixed
and that each variable enters to a power bounded by
another fixed number.
\end{abstract}

\maketitle

\noindent
Keywords:
Distribution of a polynomial,
Distribution density,
Kantorovich distance,
Fortet--Mourier distance,
Total variation distance

\noindent
AMS Subject Classification: 60E05, 60E15, 28C20

\section{Introduction}

In this note we study the regularity properties
of distributional densities of
random variables
\begin{equation}\label{rv}
f(X):=\sum_{j_1=0}^{m}\ldots\sum_{j_n=0}^{m}a_{j_1,\ldots, j_n}X_1^{j_1}\cdot\ldots\cdot X_n^{j_n},
\end{equation}
where $X_1,\ldots, X_n$ are i.i.d. $\mathcal{N}(0,1)$ random variables.
The regularity properties of general polynomial images  of measures of different classes
have been extensively studied in recent years
(see \cite{B16}, \cite{B19},  \cite{BKZ16}, \cite{BKZ}, \cite{Kos}, and \cite{Kos-FCAA}).
This research has been motivated by the paper \cite{NP} about
the connection between the total variation and Kantorovich--Rubinstein (Fortet--Mourier) distances
on the set of distributions of polynomials
(see also \cite{BC}, \cite{BC17}, \cite{BC19}, \cite{BCP}, \cite{BZ}, and~\cite{NNP}).
It turns out that the regularity of distributions plays a crucial role
in estimates between these two distances
(see \cite{BKZ}, \cite{Kos},  and \cite{Kos-FCAA}).
In particular, the following theorem was proved in~\cite{Kos}.
\vskip .1in

{\it
The distribution density $\varrho_g$ of any non-constant random variable of the form
\begin{equation}\label{rv2}
g(X):= \sum_{j_1+\ldots+j_n\le d}a_{j_1,\ldots, j_n}X_1^{j_1}\cdot\ldots\cdot X_n^{j_n},
\end{equation}
where $X:=(X_1,\ldots, X_n)$ is a Gaussian random vector,
belongs to the Nikolskii--Besov space~$B_{1,\infty}^{1/d}$.
Moreover, for any fixed $d\in \mathbb{N}$
there is a constant $C(d)$ such that
$$
\omega(\varrho_g, \varepsilon):=
\sup\limits_{|h|\le \varepsilon}\int_\mathbb{R}|\varrho_g(s+h) - \varrho_g(s)|\, ds
\le C(d)[\mathbb{D}g(X)]^{-1/2d} \varepsilon^{1/d}
$$
for any random variable $g(X)$ of the form (\ref{rv2}),
where $\mathbb{D}g(X) := \mathbb{E}\bigl(g(X) - \mathbb{E}g(X)\bigr)^2$.
}

\vskip .1in
We recall  (see \cite{BIN}, \cite{Stein}) that the Nikolskii--Besov space~$B_{1,\infty}^\alpha$, with $\alpha\in(0,1)$,
consists of all functions $\varrho\in L^1(\mathbb{R})$ such that there is a constant $C>0$
for which
$$
\int_{\mathbb{R}}|\varrho(t+h) - \varrho(t)|\, dt\le C\, |h|^\alpha\quad \forall h\in \mathbb{R}.
$$

In this note we study bounds for the modulus of continuity
$\omega(\varrho_f,\cdot)$ for polynomials $f$ of the form (\ref{rv}),
i.e., we consider polynomial images of the standard Gaussian measure on
$\mathbb{R}^n$ when the polynomial $f$ is of
degree $d$ and satisfies the following additional constraint on its structure:

{\it each variable enters \eqref{rv} only to a  power not greater than some fixed number $m\le d$.}

Our main result is as follows.

\begin{theorem}\label{T-main}
Let $m,d\in\mathbb{N}$, $d\ge m$,
let $X:=(X_1,\ldots, X_n)$ be the standard Gaussian $n$-dimensional
random vector with independent coordinates.
Then there is a constant $C(m, d)$ depending only on $d$ and $m$,
such that,
for any non-constant polynomial
$$
f(x):=\sum_{j_1=0}^{m}\ldots\sum_{j_n=0}^{m}a_{j_1,\ldots, j_m}x_1^{j_1}\cdot\ldots\cdot x_n^{j_n}
$$
with the degree $d[f]:=\max\{j_1+\ldots+j_n\colon a_{j_1,\ldots, j_m}\ne0\}\le d$,
one has
$$
\omega(\varrho_f, \varepsilon)\le
C(m,d) (\varepsilon/a[f])^{1/m}\bigl[|\ln(\varepsilon/a[f])|^{d-m} + 1\bigr]
$$
where $a[f]:=\max\limits_{j_1+\ldots+j_n=d[f]}|a_{j_1,\ldots, j_n}|$
and where $\varrho_f$ is the density of the random variable $f(X)$.
\end{theorem}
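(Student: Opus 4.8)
The plan is to prove the bound by reducing the $n$-dimensional problem to a one-dimensional one along a single coordinate direction, exploiting that the per-variable degree constraint forces every coordinate slice of $f$ to be a polynomial of degree at most $m$. The base of the whole argument is the case $d[f]=m$: here the cited theorem applies with its own degree parameter equal to $m$, and since the top-degree monomials are orthogonal (in the Hermite sense) to all lower-degree ones, one has the variance lower bound $\mathbb{D}f(X)\ge a[f]^2$; substituting this into the cited estimate gives $\omega(\varrho_f,\varepsilon)\le C(m)(\varepsilon/a[f])^{1/m}$, which is exactly the claimed bound with the logarithmic factor absent (the case $d-m=0$). So the real content is the passage from total degree $m$ to total degree $d$, and I expect to carry it out by induction on $d[f]-m$, each step producing one additional power of the logarithm.

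For the inductive slicing step, I would write $X=(X_1,X')$ with $X'=(X_2,\dots,X_n)$ and use the disintegration $\varrho_f(t)=\int \varrho_{f,x'}(t)\,\gamma'(dx')$, where $\varrho_{f,x'}$ is the density of the single-variable random variable $s\mapsto f(s,x')$ and $\gamma'$ is the Gaussian law of $X'$; this yields $\omega(\varrho_f,\varepsilon)\le\int\omega(\varrho_{f,x'},\varepsilon)\,\gamma'(dx')$. For fixed $x'$ the polynomial $p_{x'}(s):=f(s,x')=\sum_{k=0}^{m}b_k(x')s^k$ has degree at most $m$ in $s$, and the one-dimensional form of the cited theorem (again via $\mathbb{D}p_{x'}\ge b_{\mathrm{top}}(x')^2$, where $b_{\mathrm{top}}$ is the genuine leading coefficient of the slice) gives a slice bound of the shape $\omega(\varrho_{f,x'},\varepsilon)\le C(m)\min\{2,(\varepsilon/|b_{\mathrm{top}}(x')|)^{1/m}\}$. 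To keep this leading coefficient comparable to $a[f]$ I would first pick a top-degree monomial $x^{\alpha}$ realizing $|a_\alpha|=a[f]$ and rename coordinates so that $\alpha_1\ge1$; then the relevant coefficient $b_k(x')$ is a polynomial in $x'$ of degree at most $d[f]-k\le d-m$ whose own top coefficient has modulus $a[f]$.

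Integrating the slice bound against $\gamma'$ reduces everything to estimating a truncated inverse moment of the leading coefficient, i.e. a quantity of the form $\int\min\{2,(\varepsilon/|b(x')|)^{1/m}\}\,\gamma'(dx')$ with $b$ a polynomial of degree at most $d-m$ and $a[b]\ge a[f]$. This is the step where the logarithms are born: peeling one more Gaussian variable out of $b$ and conditioning, the estimate is driven by integrals of the type $\int|x|^{-1}\,\gamma_1(dx)$ truncated to $|x|\gtrsim\varepsilon$, each of which diverges like $|\ln\varepsilon|$, so that $d-m$ successive reductions accumulate exactly the factor $|\ln(\varepsilon/a[f])|^{d-m}$ while preserving the $1/m$ rate coming from the innermost coordinate slice. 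I expect the main obstacle to be precisely the bookkeeping here: the slice degree and the identity of the leading coefficient $b_{\mathrm{top}}(x')$ jump as $x'$ varies, since a positive-measure set of slices may degenerate to lower degree or even to constants, so that a single anti-concentration (small-ball) inequality for $b$ would spoil either the exponent or the power of the logarithm. The delicate point is therefore to organize the induction so that at each stage one genuinely uses the degree-$m$ rate along the peeled coordinate and routes all degeneracy into the logarithmically divergent one-dimensional inverse moments, thereby landing on the exponent $1/m$ together with the sharp number $d-m$ of logarithmic factors.
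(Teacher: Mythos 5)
Your skeleton (condition on all variables but one, use the degree-$m$ bound on the one-dimensional slice, reduce to an inverse moment of a coefficient polynomial) is the same engine as the paper's, though your packaging differs: the paper does not disintegrate $\varrho_f$ directly, but works with the dual modulus $\sigma(\varrho_f,\cdot)$, smooths by an independent $\varepsilon Z$ (so that the conditional variance is bounded below by $\varepsilon^2$ rather than truncated by a $\min$), and then has to absorb a self-referential term $2\sigma(\varrho_f,\varepsilon)$ by a dyadic iteration at the end. Your direct disintegration $\omega(\varrho_f,\varepsilon)\le\int\omega(\varrho_{f,x'},\varepsilon)\,\gamma'(dx')$ avoids that last complication, which is a genuine simplification if the rest goes through. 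Also, your worry about degenerate slices is a non-issue: apply the cited theorem with degree parameter $m$ uniformly and lower-bound the conditional variance by $c(m)\,|b_{\alpha_1}(x')|^2$ for the \emph{fixed} index $\alpha_1$ of the chosen top monomial; there is no need to track the ``genuine'' leading coefficient of each slice.

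The genuine gap is in the step ``where the logarithms are born.'' First, a bookkeeping error: the coefficient $b=b_{\alpha_1}$ has degree at most $d[f]-\alpha_1\le d-1$, not $d-m$ (only if $\alpha_1=m$ do you get $d-m$). More seriously, your proposed mechanism --- peel one Gaussian variable at a time out of $b$ and collect one-dimensional integrals $\int|x|^{-1}\,\gamma_1(dx)$ --- is not where the logarithm actually comes from, and as stated it neither closes nor gives the right count. The correct mechanism is: $b$ again satisfies the structural constraint and has $a[b]=a[f]$, so the \emph{inductive hypothesis applied to $b$} (together with the small-ball bound $P(|b|\le s)\le\sigma(\varrho_b,2s)$, Proposition \ref{prop1}) gives $P(|b|\le s)\lesssim s^{1/m}\bigl[|\ln s|^{d[b]-m}+1\bigr]$; feeding this into the layer-cake representation
$$
\mathbb{E}\min\bigl\{1,(\varepsilon/|b|)^{1/m}\bigr\}
\approx \varepsilon^{1/m}\int_{\varepsilon}^{\infty}P(|b|\le s)\,s^{-1-1/m}\,ds
$$
produces exactly one extra logarithm because the exponents $s^{1/m}$ and $s^{-1-1/m}$ match to give $\int_\varepsilon^1 s^{-1}\,ds$. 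This matching is the whole point: if you only had the generic small-ball rate $s^{1/d[b]}$ for a degree-$(d-1)$ polynomial (e.g.\ Carbery--Wright), the integral would evaluate to a power $\varepsilon^{1/(d-1)-1/m}$ and you would land on the rate $\varepsilon^{1/(d-1)}$ with no logarithm, losing the theorem. So you must run the induction on the total degree $d$ (each step reducing it by at least $1$ and adding one logarithm, hence $d-m$ steps from the base case $d=m$), and you must invoke the full inductive statement for the coefficient polynomial rather than a separate variable-peeling argument. With that correction your plan matches the paper's proof in substance.
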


This paper has been partially motivated by the papers \cite{GPU} and \cite{Ul},
where some bounds for the characteristic functions of random variables of type (\ref{rv})
are obtained, including the following estimate (see \cite[Theorem 5]{GPU}):
$$
|\mathbb{E}\exp\{it f(X)\}| \le C(n,m,d[f])|a[f]\cdot t|^{-1/m}\bigl[\ln(2 + |a[f]t|)\bigr]^\alpha
$$
where $\alpha = \frac{1}{2}\bigl(3n - \frac{d[f]}{m}\bigr) - 1$, $d[f]:=\max\{j_1+\ldots+j_n\colon a_{j_1,\ldots, j_m}\ne0\}$,
$a[f]:=\max\limits_{j_1+\ldots+j_n=d[f]}|a_{j_1,\ldots, j_n}|$.
As a corollary of Theorem \ref{T-main} we deduce a somewhat
sharper bound for the characteristic function, independent of the number of variables $n$,
with $\alpha = d[f]-m$.

\section{Definitions, notation, and known results}

In this section we introduce the definitions and notation
used throughout  the paper.
We also formulate several known results which will be important
in the proof of the main result.

\begin{definition}\label{D1}
For a function $\varrho\in L^1(\mathbb{R})$ and $\varepsilon>0$, we set
$$
\omega(\varrho, \varepsilon):= \sup\limits_{|h|\le \varepsilon}\int_\mathbb{R}|\varrho(s+h)-\varrho(s)|\, ds.
$$
\end{definition}

This is the usual $L^1$ modulus of continuity.

We use the notation
$$
\|\varphi\|_\infty:= \sup_{x\in \mathbb{R}}|\varphi(x)|
$$
for a function $\varphi\in C_0^\infty(\mathbb{R})$,
where $C_0^\infty(\mathbb{R})$ is the class of all
infinitely differentiable compactly supported functions.

In \cite{Kos-Sb} (see also \cite{Kos-FCAA}),
the following modulus of continuity was introduced.

\begin{definition}\label{D2}
For a function $\varrho\in L^1(\mathbb{R})$ and $\varepsilon>0$, we set
$$
\sigma(\varrho, \varepsilon):=
\sup\Bigl\{\int\varphi'(s)\varrho(s)\, ds:\,
\|\varphi\|_\infty\le \varepsilon,\, \|\varphi'\|_\infty\le1\Bigr\},
$$
where the supremum is taken over all functions $\varphi\in C_0^\infty(\mathbb{R})$.
\end{definition}

We note that $\sigma(\varrho, \cdot)$
is a monotone concave modulus of continuity (see \cite[Lemma 2.1]{Kos-Sb}).

According to \cite[Theorem 2.1]{Kos-Sb}, one has the following
equivalence of these two moduli of continuity.

\begin{proposition}\label{prop2}
For any function $\varrho\in L^1(\mathbb{R})$, we have
$$
2^{-1}\omega(\varrho, 2\varepsilon)\le
\sigma(\varrho, \varepsilon)
\le 6\, \omega(\varrho, \varepsilon).
$$
\end{proposition}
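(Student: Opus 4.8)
The plan is to prove the two inequalities separately. Both rest on the elementary shift identity
$$\int_{\mathbb{R}} \psi(s)\bigl[\varrho(s+h)-\varrho(s)\bigr]\,ds = \int_{\mathbb{R}} \bigl[\psi(s-h)-\psi(s)\bigr]\varrho(s)\,ds,$$
valid for every bounded measurable $\psi$ and every $h\in\mathbb{R}$ by the change of variables $s\mapsto s-h$, together with the $L^1$--$L^\infty$ duality $\int_{\mathbb{R}}|g|=\sup\{\int_{\mathbb{R}}\psi g:\ \psi\in C_0^\infty(\mathbb{R}),\ \|\psi\|_\infty\le1\}$ valid for $g\in L^1(\mathbb{R})$.

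For the lower bound $2^{-1}\omega(\varrho,2\varepsilon)\le\sigma(\varrho,\varepsilon)$, I would fix $h$ with $|h|\le2\varepsilon$ and a test function $\psi\in C_0^\infty(\mathbb{R})$ with $\|\psi\|_\infty\le1$, and set $\varphi(s):=-\int_{s-h}^{s}\psi(u)\,du$. Then $\varphi\in C_0^\infty(\mathbb{R})$ (it vanishes outside a compact set, since $\psi$ does), $\varphi'(s)=\psi(s-h)-\psi(s)$ so that $\|\varphi'\|_\infty\le2$, and $\|\varphi\|_\infty\le|h|\le2\varepsilon$. By the shift identity $\int\varphi'\varrho=\int\psi(s)[\varrho(s+h)-\varrho(s)]\,ds$. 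Passing to $\tfrac12\varphi$, which is admissible in the definition of $\sigma(\varrho,\varepsilon)$, yields $\tfrac12\int\psi(s)[\varrho(s+h)-\varrho(s)]\,ds\le\sigma(\varrho,\varepsilon)$. Taking the supremum over such $\psi$ by duality, and then over $|h|\le2\varepsilon$, gives the claim.

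The substance of the statement is the upper bound $\sigma(\varrho,\varepsilon)\le6\,\omega(\varrho,\varepsilon)$, and the main obstacle there is that for $\varphi$ admissible in $\sigma$ we control only $\|\varphi\|_\infty\le\varepsilon$ and $\|\varphi'\|_\infty\le1$, with no bound on $\varphi''$; hence a naive difference-quotient approximation of $\varphi'$ fails, because $\varepsilon^{-1}\omega(\varrho,\varepsilon)$ need not stay bounded. I would instead smooth $\varrho$: fix a nonnegative $\eta\in C_0^\infty(\mathbb{R})$ supported in $[-1,1]$ with $\int\eta=1$, put $\eta_\varepsilon(t):=\varepsilon^{-1}\eta(t/\varepsilon)$ and $\varrho_\varepsilon:=\varrho*\eta_\varepsilon$, and split $\int\varphi'\varrho=\int\varphi'(\varrho-\varrho_\varepsilon)+\int\varphi'\varrho_\varepsilon$. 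For the first term, writing $\varrho_\varepsilon-\varrho=\int\eta_\varepsilon(t)[\varrho(\cdot-t)-\varrho]\,dt$ and using $\|\varphi'\|_\infty\le1$ gives $|\int\varphi'(\varrho-\varrho_\varepsilon)|\le\int_{\mathbb{R}}|\varrho-\varrho_\varepsilon|\le\int\eta_\varepsilon(t)\,\omega(\varrho,|t|)\,dt\le\omega(\varrho,\varepsilon)$ by monotonicity of $\omega$.

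For the second term I would integrate by parts, $\int\varphi'\varrho_\varepsilon=-\int\varphi\,\varrho_\varepsilon'$, and then crucially use that $\varrho_\varepsilon'=\varrho*\eta_\varepsilon'$ with $\int\eta_\varepsilon'=0$, so that $\varrho_\varepsilon'(s)=\int\eta_\varepsilon'(t)[\varrho(s-t)-\varrho(s)]\,dt$ and therefore $\int_{\mathbb{R}}|\varrho_\varepsilon'|\le\varepsilon^{-1}\|\eta'\|_{L^1}\,\omega(\varrho,\varepsilon)$; this mean-zero cancellation is exactly what converts the dangerous factor $\varepsilon^{-1}$ into a factor of $\omega(\varrho,\varepsilon)$. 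Combined with $\|\varphi\|_\infty\le\varepsilon$ it bounds the second term by $\|\eta'\|_{L^1}\,\omega(\varrho,\varepsilon)$. Altogether $\int\varphi'\varrho\le(1+\|\eta'\|_{L^1})\,\omega(\varrho,\varepsilon)$, and choosing $\eta$ to be a nearly flat bump so that $\|\eta'\|_{L^1}\le5$ gives the stated factor $6$; taking the supremum over admissible $\varphi$ completes the argument.
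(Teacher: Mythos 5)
Your proposal is correct. Note that the paper does not actually prove Proposition \ref{prop2}; it is quoted from the reference [Kos-Sb, Theorem 2.1], so there is no in-text argument to compare against. Your two halves are both sound: for the lower bound, the primitive $\varphi(s)=-\int_{s-h}^{s}\psi(u)\,du$ is indeed in $C_0^\infty(\mathbb{R})$ with $\|\varphi\|_\infty\le|h|\le2\varepsilon$ and $\|\varphi'\|_\infty\le2$, so $\tfrac12\varphi$ is admissible and the duality step recovers $\tfrac12\int|\varrho(\cdot+h)-\varrho|$; for the upper bound, the mollification split correctly isolates the two places where each hypothesis on $\varphi$ is used, and the mean-zero identity $\varrho_\varepsilon'(s)=\int\eta_\varepsilon'(t)[\varrho(s-t)-\varrho(s)]\,dt$ is exactly the cancellation needed to trade the factor $\varepsilon^{-1}$ from $\|\eta_\varepsilon'\|_{L^1}$ against $\|\varphi\|_\infty\le\varepsilon$. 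Your constant is in fact better than required: a nonnegative unimodal $\eta$ supported in $[-1,1]$ with $\int\eta=1$ can be chosen with $\|\eta'\|_{L^1}$ arbitrarily close to $1$ (its total variation is twice its maximum, which can be taken near $1/2$), giving $\sigma(\varrho,\varepsilon)\le(2+\delta)\,\omega(\varrho,\varepsilon)$; your smooth-kernel convolution is essentially the box-average argument $\frac{1}{2\varepsilon}\int_{-\varepsilon}^{\varepsilon}(\cdots)\,dh$ that is standard for this equivalence, so the approach is the natural one and the stated inequality with $6$ follows a fortiori.
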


We note that for the density $\varrho$
of a random variable $W$
the modulus of continuity
$\sigma(\varrho, \cdot)$ is calculated as follows:
$$
\sigma(\varrho, \varepsilon):=
\sup\Bigl\{\mathbb{E}\varphi'(W):\,
\|\varphi\|_\infty\le \varepsilon,\, \|\varphi'\|_\infty\le1\Bigr\},
$$
where the supremum is taken over all functions $\varphi\in C_0^\infty(\mathbb{R})$.

In \cite[Corollary 2.2]{Kos-Sb}, the following result is also obtained.

\begin{proposition}\label{prop1}
Assume that the distribution of some random variable $W$
is absolutely continuous and has density $\varrho$.
Then
$$
P(W\in A)\le \sigma\bigl(\varrho, \lambda(A)\bigr)
$$
for each Borel set $A\subset \mathbb{R}$.
Here $\lambda$ denotes the standard Lebesgue measure on $\mathbb{R}$.
\end{proposition}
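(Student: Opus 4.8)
The plan is to prove the inequality $P(W\in A)=\int_A\varrho(s)\,ds\le\sigma\bigl(\varrho,\lambda(A)\bigr)$ by exhibiting, for a Borel set $A$ of finite measure $\varepsilon:=\lambda(A)$, a family of admissible test functions $\varphi\in C_0^\infty(\mathbb{R})$ for which $\int\varphi'(s)\varrho(s)\,ds$ approaches $\int_A\varrho(s)\,ds$. The guiding idea is that the \emph{ideal} choice would be $\varphi'=\mathbf 1_A$: then $\int\varphi'\varrho=\int_A\varrho=P(W\in A)$, the constraint $\|\varphi'\|_\infty\le1$ holds automatically, and the primitive $x\mapsto\int_{-\infty}^x\mathbf 1_A(t)\,dt$ is nondecreasing from $0$ to $\varepsilon$, so $\|\varphi\|_\infty\le\varepsilon$ as well. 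This function is, however, neither smooth nor compactly supported, and the core of the argument is to repair both defects without violating either norm bound.

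First I would restore compact support. Fix a large $R>0$, put $A_R:=A\cap[-R,R]$ and $\varepsilon_R:=\lambda(A_R)\le\varepsilon$, and define a Lipschitz function $G_R$ that increases with derivative $\mathbf 1_{A_R}$ on $[-R,R]$ (thus rising from $0$ to $\varepsilon_R$) and then returns to $0$ with slope $-1$ on $[R,R+\varepsilon_R]$, staying $0$ elsewhere. By construction $G_R$ is supported in $[-R,R+\varepsilon_R]$, satisfies $0\le G_R\le\varepsilon_R\le\varepsilon$ and $|G_R'|\le1$ a.e., and
$$
\int G_R'(s)\varrho(s)\,ds=\int_{A_R}\varrho(s)\,ds-\int_R^{R+\varepsilon_R}\varrho(s)\,ds.
$$
Since $\varrho\ge0$ and $\varrho\in L^1(\mathbb{R})$, the first term tends to $\int_A\varrho=P(W\in A)$ as $R\to\infty$ by monotone convergence, while the correction term is bounded by $\int_R^\infty\varrho\to0$; hence $\int G_R'\varrho\to P(W\in A)$.

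Next I would smooth $G_R$ by convolving with a standard mollifier $\eta_h$ (nonnegative, smooth, supported in $[-h,h]$, with $\int\eta_h=1$), setting $\varphi_{R,h}:=G_R*\eta_h\in C_0^\infty(\mathbb{R})$. The decisive point is that convolution against a probability density does not increase the sup-norm, so $\|\varphi_{R,h}\|_\infty\le\|G_R\|_\infty\le\varepsilon$ and $\|\varphi_{R,h}'\|_\infty=\|G_R'*\eta_h\|_\infty\le\|G_R'\|_\infty\le1$; thus each $\varphi_{R,h}$ is admissible in Definition~\ref{D2} and $\int\varphi_{R,h}'\varrho\le\sigma(\varrho,\varepsilon)$. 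Writing $\int(G_R'*\eta_h)\varrho=\int\eta_h(y)\bigl[\int G_R'(u)\varrho(u+y)\,du\bigr]\,dy$ and using continuity of translation in $L^1$ together with $|G_R'|\le1$, one sees that the inner integral is continuous in $y$, so $\int\varphi_{R,h}'\varrho\to\int G_R'\varrho$ as $h\to0$. Consequently $\sigma(\varrho,\varepsilon)\ge\int G_R'\varrho$ for every $R$, and letting $R\to\infty$ gives $\sigma(\varrho,\lambda(A))\ge P(W\in A)$, as required.

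The only remaining case is $\lambda(A)=\infty$, which is easy: an analogous construction with $\varphi'$ approximating $\operatorname{sgn}\varrho$ (now with no constraint on $\|\varphi\|_\infty$) shows $\sigma(\varrho,\infty)\ge\int|\varrho|=1\ge P(W\in A)$. The main obstacle throughout is the simultaneous preservation of the two constraints $\|\varphi\|_\infty\le\varepsilon$ and $\|\varphi'\|_\infty\le1$ under both the truncation and the smoothing; the return-to-zero repair is engineered precisely to keep the primitive bounded by $\varepsilon$, and the choice of a probability-density mollifier is what renders the smoothing step cost-free for both norms.
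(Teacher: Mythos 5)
Your proof is correct, but there is no internal proof to compare it against: the paper states Proposition \ref{prop1} as a known result imported from Corollary 2.2 of \cite{Kos-Sb}, with no argument given in this note. Your construction is the natural direct one, and every step checks out: since $\sigma(\varrho,\cdot)$ is defined as a supremum over test functions, any lower bound for it must be proved by exhibiting near-optimal admissible $\varphi$, and your mollified truncations of the primitive $x\mapsto\lambda\bigl(A\cap[-R,x]\bigr)$, with the slope $-1$ return ramp, do exactly this. The three delicate points are all handled: the ramp restores compact support at the cost of an error $\int_R^{R+\varepsilon_R}\varrho\,ds\to 0$ (integrability of $\varrho$); convolution with a probability mollifier preserves both constraints $\|\varphi\|_\infty\le\lambda(A)$, $\|\varphi'\|_\infty\le 1$ and commutes with differentiation because $G_R$ is Lipschitz and compactly supported; and the $h\to 0$ limit is justified by $L^1$-continuity of translations. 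Two small simplifications are available: when $\lambda(A)=0$, absolute continuity makes both sides vanish, so that case is trivial; and when $\lambda(A)=\infty$ you do not need a separate $\operatorname{sgn}\varrho$ construction, since $\sigma(\varrho,\cdot)$ is nondecreasing in $\varepsilon$, so the finite case applied to $A\cap[-R,R]$ already gives $P\bigl(W\in A\cap[-R,R]\bigr)\le\sigma\bigl(\varrho,\lambda(A\cap[-R,R])\bigr)\le\sigma(\varrho,\infty)$, and letting $R\to\infty$ finishes. The trade-off relative to the paper is simply self-containedness versus brevity: the paper defers to \cite{Kos-Sb}, where this statement sits alongside the machinery producing the $\sigma$--$\omega$ equivalence of Proposition \ref{prop2}, whereas your argument proves it from Definition \ref{D2} alone.
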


We will need the following theorem from \cite{Kos} (see also \cite{BKZ} and \cite{Kos-FCAA}).

\begin{theorem}\label{t1}
Let $d\in\mathbb{N}$.
Then there is a constant $C(d)$ depending only on  $d$
such that, for any polynomial $g$ of degree at most $d$, any Gaussian random vector $X$,
and any function $\varphi\in C_0^\infty(\mathbb{R})$, one has
$$
[\mathbb{D}g(X)]^{1/2d}\mathbb{E}\, \varphi'\bigl(g(X)\bigr)\le
C(d)\|\varphi\|_\infty^{1/d}\|\varphi'\|_\infty^{1-1/d}
$$
where $\mathbb{D}g(X)$ is the variance of $g(X)$.
In particular,
$$
\sigma(\varrho_g, \varepsilon)\le C(d)[\mathbb{D}g(X)]^{-1/2d}\varepsilon^{1/d}.
$$
\end{theorem}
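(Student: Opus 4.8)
The plan is to establish the pointwise dual inequality
$$
[\mathbb{D}g(X)]^{1/2d}\,\mathbb{E}\,\varphi'(g(X)) \le C(d)\,\|\varphi\|_\infty^{1/d}\,\|\varphi'\|_\infty^{1-1/d},
$$
after which the asserted bound on $\sigma(\varrho_g,\varepsilon)$ is immediate: taking the supremum over $\varphi\in C_0^\infty(\mathbb{R})$ with $\|\varphi\|_\infty\le\varepsilon$ and $\|\varphi'\|_\infty\le1$ and recalling Definition~\ref{D2} gives exactly $\sigma(\varrho_g,\varepsilon)\le C(d)[\mathbb{D}g(X)]^{-1/2d}\varepsilon^{1/d}$. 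First I would normalize. Both sides are homogeneous of degree one under $\varphi\mapsto c\varphi$, so I may assume $\|\varphi'\|_\infty=1$; replacing $g$ by $g/\sqrt{\mathbb{D}g(X)}$ and adjusting $\varphi$ accordingly, and subtracting the mean, I reduce to a polynomial $g$ of degree at most $d$ with $\mathbb{E}g(X)=0$ and $\mathbb{D}g(X)=1$. The target then becomes $\mathbb{E}\,\varphi'(g(X))\le C(d)\varepsilon^{1/d}$ with $\varepsilon:=\|\varphi\|_\infty$ and $\|\varphi'\|_\infty\le1$.

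The engine is Gaussian integration by parts. Writing $L=-\Delta+\langle x,\nabla\rangle$ for the Ornstein--Uhlenbeck operator (the adjoint $\delta\nabla$ of the gradient with respect to the standard Gaussian measure), the chain rule together with the duality $\mathbb{E}\langle\nabla u,\nabla g\rangle=\mathbb{E}[u\,Lg]$ applied to $u=\varphi\circ g$ yields
$$
\mathbb{E}\bigl[\varphi'(g)\,|\nabla g|^2\bigr]=\mathbb{E}\bigl[\varphi(g)\,Lg\bigr].
$$
Decomposing $g$ into Wiener chaoses and using $LJ_k=kJ_k$, one has $\mathbb{E}(Lg)^2=\sum_{k\ge1}k^2\|J_kg\|_2^2\le d^2\,\mathbb{D}g(X)=d^2$, so the right-hand side is at most $\|\varphi\|_\infty\,\mathbb{E}|Lg|\le d\,\varepsilon$. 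The same computation yields the nondegeneracy $\mathbb{E}|\nabla g|^2=\mathbb{E}[g\,Lg]=\sum_{k\ge1}k\|J_kg\|_2^2\ge\mathbb{D}g(X)=1$, which guarantees that the weight $|\nabla g|^2$ is not typically small.

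The main difficulty is to pass from this $|\nabla g|^2$-weighted estimate to the unweighted quantity $\mathbb{E}\,\varphi'(g)$, and this is where anti-concentration of the gradient enters. Introducing the regularized field $F_\tau:=\nabla g/(|\nabla g|^2+\tau)$ and applying the divergence formula $\mathbb{E}\langle\nabla(\varphi\circ g),F_\tau\rangle=\mathbb{E}[\varphi(g)\,\delta F_\tau]$, one obtains
$$
\mathbb{E}\,\varphi'(g)=\mathbb{E}\bigl[\varphi(g)\,\delta F_\tau\bigr]+\mathbb{E}\Bigl[\varphi'(g)\,\frac{\tau}{|\nabla g|^2+\tau}\Bigr],
$$
where the second summand is bounded by $\mathbb{E}\,\tau(|\nabla g|^2+\tau)^{-1}$ (no sign hypothesis on $\varphi'$ is needed) and the first by $\varepsilon\,\mathbb{E}|\delta F_\tau|$. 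Since $\delta F_\tau$ is a combination of $Lg$, $\nabla g$ and $\nabla^2 g$ divided by powers of $|\nabla g|^2+\tau$, both terms reduce to negative moments of $|\nabla g|^2+\tau$. These I would control by the Carbery--Wright anti-concentration inequality applied to the polynomial $|\nabla g|^2$: combined with the lower bound $\mathbb{E}|\nabla g|^2\ge1$ it gives $\mathbb{P}(|\nabla g|^2\le u)\le C(d)\,u^{\kappa}$ for a positive exponent $\kappa$, whence the required negative moments, and optimizing in $\tau$ produces a power of $\varepsilon$.

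The delicate point---the step I expect to be the main obstacle---is to recover the \emph{sharp} exponent $1/d$ rather than the weaker power that the crude argument returns, the loss stemming from the degree $2(d-1)$ of $|\nabla g|^2$. I would obtain sharpness by an induction on the degree, in which the contribution of the near-critical region $\{|\nabla g|\text{ small}\}$ is treated through the directional derivatives of $g$, which are polynomials of degree $d-1$, so that the inductive hypothesis closes the estimate with the correct exponent. The base case $d=1$ is the Gaussian density, which is Lipschitz and gives exponent $1$, consistent with the claimed $1/d$.
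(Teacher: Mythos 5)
First, a point of reference: the paper does not prove Theorem \ref{t1} at all --- it is imported from \cite{Kos} (see also \cite{BKZ}, \cite{Kos-FCAA}) --- so your attempt must be measured against the proofs in that cited literature. The first part of your argument is correct as far as it goes: the normalization, the identity $\mathbb{E}[\varphi'(g)|\nabla g|^2]=\mathbb{E}[\varphi(g)Lg]$, the chaos bounds $\mathbb{E}(Lg)^2\le d^2\,\mathbb{D}g(X)$ and $\mathbb{E}|\nabla g|^2\ge \mathbb{D}g(X)$, and the regularized identity with $F_\tau=\nabla g/(|\nabla g|^2+\tau)$ are all valid. But this is exactly the Malliavin-regularization scheme of Nourdin--Poly \cite{NP}, and, implemented as you describe it (bound $\mathbb{E}|\delta F_\tau|$ and $\mathbb{E}[\tau(|\nabla g|^2+\tau)^{-1}]$ by negative moments of $|\nabla g|^2+\tau$, control those by Carbery--Wright, optimize in $\tau$), it provably yields a suboptimal exponent: Carbery--Wright gives $P(|\nabla g|^2\le u)\le C(d)u^{1/(2(d-1))}$, the Cauchy--Schwarz step costs $\mathbb{E}|\delta F_\tau|\lesssim \tau^{\kappa/2-1}$ with $\kappa=1/(2(d-1))$, and optimization produces $\varepsilon^{2\kappa/(2+\kappa)}=\varepsilon^{2/(4d-3)}$, which is strictly smaller than $\varepsilon^{1/d}$ for every $d\ge 2$. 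The sharp exponent $1/d$ is the entire content of the theorem (weaker exponents were already known from \cite{NP}), and it is precisely what the present paper needs: both the base case and the convergence of the dyadic series $\sum_k 2^k\sigma(\varrho_f,t2^{-2dk})$ in the proof of Theorem \ref{T-main} rely on the power being exactly $t^{1/d}$.

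The genuine gap is therefore your last paragraph, which you yourself flag as the main obstacle: ``induction on the degree, treating the near-critical region through directional derivatives'' is a one-sentence hope, not an argument, and it does not close as stated. The inductive hypothesis is a statement about the law of a degree-$(d-1)$ polynomial normalized by its \emph{variance}, whereas controlling $\{|\nabla g|\ \mathrm{small}\}$ requires anti-concentration of $\partial_e g$ in terms of $\mathbb{E}(\partial_e g)^2$ (a directional derivative can be a nonzero constant, with variance zero, making the hypothesis vacuous); and even granting the best anti-concentration for $|\nabla g|$, the reduction to negative moments still returns $2/(4d-3)$, so no anti-concentration input alone can repair the scheme --- one must exploit the correlation between the smallness of $|\nabla g|$ and the behavior of $Lg$ and $\nabla^2 g$ on the same region, which your outline never does. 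The cited proofs reach $1/d$ by a different mechanism: restrict $g$ to lines $t\mapsto g(y+te)$, use that a univariate polynomial of degree $d$ has at most $d$ intervals of monotonicity to integrate by parts away from the sublevel set of its derivative (this one-dimensional step is what produces $\|\varphi\|_\infty^{1/d}\|\varphi'\|_\infty^{1-1/d}$ with the correct exponent), bound the exceptional set by root-counting/sublevel estimates for the degree-$(d-1)$ derivative, and only then average over lines using negative-moment bounds for the coefficient polynomials. That one-dimensional integration by parts, not the Malliavin regularization, is the missing idea; as written, your proposal establishes a version of the theorem with a strictly weaker exponent, which would not suffice for this paper.
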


\section{Proof of Theorem \ref{T-main}}

We first note that by Proposition \ref{prop2} we can work with
$\sigma(\varrho_f,\cdot)$ in place of $\omega(\varrho_f,\cdot)$.
Since $\sigma(\varrho_{\alpha f},t) = \sigma(\varrho_f,t/\alpha)$,
it is sufficient to prove the bound for polynomials
$f$ with $a[f]=1$.

We now assume that $a[f]=1$.
In this case the proof will be done by induction on $d$.
We note that the number $a[f]$ is a coefficient of some monomial
$x_1^{j^0_1}\cdot\ldots\cdot x_n^{j^0_n}$.

Let us assume that $d=m$. Due to the equivalence
of the $L^2$-norm
and the Sobolev norm of the Gaussian Sobolev space $W^{2,d}$
on the space of all polynomials of degree not greater than~$d$
(see \cite[Corollary 5.5.5 and Theorem 5.7.2]{Gaus})
one has
$$
[\mathbb{D}f(X)]^{1/2}
\ge
c_1(d)
\Bigl[\mathbb{E}\Bigl|
\frac{\partial^{j_1^0+\ldots+j_n^0}f}{\partial x_1^{j_1^0}\ldots\partial x_n^{j_n^0}}(X)\Bigr|^2\Bigr]^{1/2}
=
c_1(d)\cdot j_1^0!\cdot\ldots\cdot j_n^0!\cdot a[f]\ge c_1(d).
$$
By Theorem \ref{t1} we have
$$
\sigma(\gamma_n\circ f^{-1}, t)\le C(d)[c_1(d)]^{-1/d}t^{1/d}.
$$
Thus, the base case $d=m$ of induction is proved.

We now make the inductive step.
Let $Z$ be the standard Gaussian random variable independent
of the random vector~$X$.
We note that for any $\varepsilon>0$
one has
$$
\mathbb{E}\, \varphi'\bigl(f(X)\bigr)
= \mathbb{E}\bigl[\varphi'\bigl(f(X)\bigr)-\varphi'\bigl(f(X) + \varepsilon Z\bigr)\bigr]
+
\mathbb{E}\, \varphi'\bigl(f(X) + \varepsilon Z\bigr).
$$
For the first term we have
\begin{multline*}
\mathbb{E}\bigl[\varphi'\bigl(f(X)\bigr)-\varphi'\bigl(f(X) + \varepsilon Z\bigr)\bigr]
\le
\|\varphi'\|_\infty\, \mathbb{E}_Z\Bigl[\int_\mathbb{R}
|\varrho_f(t) - \varrho_f(t - \varepsilon Z)|\,dt\Bigr]
\\
\le
2\|\varphi'\|_\infty\, \mathbb{E}_Z\bigl[\sigma(\varrho_f, \varepsilon |Z|)\bigr]
\le
2\|\varphi'\|_\infty\, \sigma\bigl(\varrho_f, \varepsilon\cdot \mathbb{E}|Z|\bigr)
\le 2\|\varphi'\|_\infty\, \sigma(\varrho_f, \varepsilon),
\end{multline*}
where we have used Proposition \ref{prop2}, the concavity and the monotonicity of the function
$\sigma(\varrho_f, \cdot)$.

We now estimate the second term.
As we have already mentioned, the number $a[f]$ is a coefficient of some monomial
$x_1^{j^0_1}\cdot\ldots\cdot x_n^{j^0_n}$.
Without loss of generality we can assume that $j^0_n\ne 0$.
Consider the polynomial $f$
as a polynomial of the $n$-th variable~$x_n$:
$$
f(x_1,\ldots, x_{n-1},x_n) = \sum_{j=0}^{m}f_j(x_1,\ldots, x_{n-1}) x_n^j.
$$
We apply Theorem \ref{t1} to the random variable
$f(x_1,\ldots, x_{n-1}, X_n, Z)$, which gives the bound
\begin{multline*}
\mathbb{E}\, \varphi'\bigl(f(X) + \varepsilon Z\bigr)
=
\mathbb{E}_{X_1,\ldots, X_{n-1}} \mathbb{E}_{X_n,Z}\,
\varphi'\bigl(f(X_1,\ldots, X_{n-1}, X_n) + \varepsilon Z\bigr)
\\
\le
C(m)\|\varphi\|_\infty^{1/m}\|\varphi'\|_\infty^{1-1/m}
\mathbb{E}_{X_1,\ldots, X_{n-1}}
\bigl(\mathbb{D}_{X_n}f(X_1,\ldots, X_{n-1},X_n)+\varepsilon^2\bigr)^{-1/2m}.
\end{multline*}
We recall that
for any polynomial $g(s)=\sum\limits_{j=0}^{m}a_js^j$,
by the Hermite polynomial expansion,
one has
$$\mathbb{D}g(X_n)\ge\frac{1}{m}\mathbb{E}|g'(X_n)|^2\ge c_2(m)\max\limits_{1\le j\le m}|a_j|^2,$$
where the last inequality
follows from the equivalence of any two norms on a finite-dimensional space.
Without loss of generality we assume that $c_2(m)\le1$.
Thus,
\begin{multline*}
\mathbb{E}\, \varphi'\bigl(f(X) + \varepsilon Z\bigr)
\le
C(m)\|\varphi\|_\infty^{1/m}\|\varphi'\|_\infty^{1-1/m}
\mathbb{E}_{X_1,\ldots, X_{n-1}} \bigl(c_2(m)^2|f_{j^0_n}(X_1,\ldots, X_{n-1})|^2 + \varepsilon^2\bigr)^{-1/2m}
\\
\le
C(m)c_2(m)^{-1/m}\|\varphi\|_\infty^{1/m}\|\varphi'\|_\infty^{1-1/m}
\mathbb{E}_{X_1,\ldots, X_{n-1}} \bigl(|f_{j^0_n}(X_1,\ldots, X_{n-1})|^2 + \varepsilon^2\bigr)^{-1/2m}.
\end{multline*}
Moreover, one has
\begin{multline*}
\mathbb{E}_{X_1,\ldots, X_{n-1}} \bigl(|f_{j^0_n}(X_1,\ldots, X_{n-1})|^2 + \varepsilon^2\bigr)^{-1/2m}
\\
=
\int_{0}^{\varepsilon^{-1/m}}
P\Bigl(\bigl(|f_{j^0_n}(X_1,\ldots, X_{n-1})|^2 + \varepsilon^2\bigr)^{-1/2m}\ge \tau\Bigr)\, d\tau
\\
=
\frac{1}{m}\int_{0}^{\infty}\frac{s}{(s^2+\varepsilon^2)^{1+1/2m}}\,
P\bigl(|f_{j^0_n}(X_1,\ldots, X_{n-1})|\le s\bigr)\, ds
\\
\le
\frac{8}{m}\int_{0}^{\infty}\frac{s}{(s+\varepsilon)^{2+1/m}}\,
\sigma(\varrho_{f_{j^0_n}}, s)\, ds
=
\frac{8}{m}\varepsilon^{-1/m}\int_{0}^{\infty}\frac{t}{(t+1)^{2+1/m}}\,
\sigma(\varrho_{f_{j^0_n}}, \varepsilon t)\, dt.
\end{multline*}
We now note that $d[f_{j^0_n}]\le d-1$ and $a[f_{j^0_n}]=a[f]=1$.
Thus, by the inductive hypothesis,
one has
\begin{multline*}
\sigma(\varrho_{f_{j^0_n}}, \varepsilon t)
\le
C(m, d-1)(\varepsilon t)^{1/m}\bigl[|\ln \varepsilon t|^{d-1-m}+1\bigr]
\\
\le
2^{d-1-m}C(m, d-1)(\varepsilon t)^{1/m}\bigl[|\ln t|^{d-1-m}+|\ln \varepsilon|^{d-1-m}+1\bigr],
\end{multline*}
which implies that
\begin{multline*}
\varepsilon^{-1/m}\int_{0}^{\infty}\frac{t}{(t+1)^{2+1/m}}\,
\sigma(\varrho_{f_{j^0_n}}, \varepsilon t)\, dt
\le
\varepsilon^{-1/m}\int_{0}^{\infty}\frac{1}{(t+1)^{1+1/m}}\,
\sigma(\varrho_{f_{j^0_n}}, \varepsilon t)\, dt
\\
\le
2^{d-1-m}C(m, d-1)\int_{0}^{\varepsilon^{-1}}
\frac{t^{1/m}\bigl[|\ln t|^{d-1-m}+|\ln \varepsilon|^{d-1-m}+1\bigr]}{(t+1)^{1+1/m}}\, dt
+
\varepsilon^{-1/m}\int_{\varepsilon^{-1}}^{\infty}\frac{1}{(t+1)^{1+1/m}}\, dt
\\
\le
2^{d-1-m}C(m, d-1)\int_{0}^{\varepsilon^{-1}}
\frac{\bigl[|\ln t|^{d-1-m}+|\ln \varepsilon|^{d-1-m}+1\bigr]}{t+1}\, dt
+
(1+\varepsilon)^{-1/m}.
\end{multline*}
We now assume that $\varepsilon\in(0,e^{-1}]$.
In this case
\begin{multline*}
\int_{0}^{\varepsilon^{-1}}
\frac{\bigl[|\ln t|^{d-1-m}+|\ln \varepsilon|^{d-1-m}+1\bigr]}{t+1}\, dt
\\
=
\int_{0}^{1}
\frac{|\ln t|^{d-1-m}}{t+1}\, dt
+
\int_{1}^{\varepsilon^{-1}}
\frac{|\ln t|^{d-1-m}}{t+1}\, dt
+
\bigl[|\ln \varepsilon|^{d-1-m}+1\bigr]\ln(1+\varepsilon^{-1})
\\
\le
c(d,m) + |\ln\varepsilon|^{d-1-m}\ln (1+\varepsilon^{-1})
+ \bigl[|\ln \varepsilon|^{d-1-m}+1\bigr]\ln(1+\varepsilon^{-1})
\le 10|\ln\varepsilon|^{d-m} + c(d,m).
\end{multline*}
Thus, for $\varepsilon\in(0,e^{-1}]$ we have
$$
\mathbb{E}\, \varphi'\bigl(f(X) + \varepsilon Z\bigr)
\le
C_1(m,d)\|\varphi\|_\infty^{1/m}\|\varphi'\|_\infty^{1-1/m}[|\ln\varepsilon|^{d-m} + 1]
$$
and
$$
\mathbb{E}\, \varphi'\bigl(f(X)\bigr)
\le
2\|\varphi'\|_\infty\, \sigma(\gamma_n\circ f^{-1}, \varepsilon)
+
C_1(m,d)\|\varphi\|_\infty^{1/m}\|\varphi'\|_\infty^{1-1/m}[|\ln\varepsilon|^{d-m} + 1],
$$
which implies that
$$
\sigma(\varrho_f, t)
\le
2 \sigma(\varrho_f, \varepsilon)
+
C_1(m,d)\, t^{1/m}[|\ln\varepsilon|^{d-m} + 1]
$$
for any $t>0$ and $\varepsilon\in(0,e^{-1}]$.
We first consider the case $t\in(0,e^{-1}]$.
In this case,
$$
\sigma(\varrho_f, t)
=
\sum\limits_{k=0}^{\infty}2^k\Bigl[\sigma\bigl(\varrho_f, t2^{-2dk}\bigr) -
2\sigma\bigl(\varrho_f, t2^{-2d(k+1)}\bigr)\Bigr],
$$
since, by Theorem \ref{t1}, one has
$$
2^{k+1}\sigma\bigl(\varrho_f, t2^{-2d(k+1)}\bigr)
\le
C(d)[\mathbb{D}f(X)]^{-1/2d}\, t^{1/d}2^{-k-1}\to0.
$$
Hence
$$
\sigma(\varrho_f, t)
\le
C_1(m,d)t^{1/m}\sum\limits_{k=0}^{\infty}2^{k(1-2d/m)}[|\ln t2^{-2d(k+1)}|^{d-m} + 1].
$$
Note that
\begin{multline*}
\sum\limits_{k=0}^{\infty}2^{k(1-2d/m)}[|\ln t2^{-2d(k+1)}|^{d-m} + 1]
\\
\le
2^{d-m}[|\ln t|^{d-m}+1]\sum\limits_{k=0}^{\infty}2^{k(1-2d/m)}
+
4^{d-m}d^{d-m}\sum\limits_{k=0}^{\infty}(k+1)^{d-m}2^{k(1-2d/m)}.
\end{multline*}
Since $1-2d/m\le -d/m\le-1$, both series above converge.
Therefore,
$$
\sigma(\varrho_f, t)
\le
C(m,d)t^{1/m}[|\ln t|^{d-m}+1]
$$
for $t\in(0,e^{-1}]$.

For $t\ge e^{-1}$ we have
$$
\sigma(\varrho_f, t)\le 1\le  e^{1/m} t^{1/m}\bigl[|\ln t|^{d-m} + 1\bigr].
$$
The theorem is proved.

\section{Applications}

In this section we discuss two applications of the obtained result.

Firstly, we apply Theorem \ref{T-main}
to obtain bounds for characteristic functions.

\begin{corollary}
Let $m,d\in\mathbb{N}$, $d\ge m$,
let $X:=(X_1,\ldots, X_n)$ be the standard Gaussian $n$-dimensional
random vector with independent coordinates.
Then there is a constant $C(m, d)$, depending only on $d$ and $m$,
such that,
for any non-constant polynomial
$$
f(x):=\sum_{j_1=0}^{m}\ldots\sum_{j_n=0}^{m}a_{j_1,\ldots, j_m}x_1^{j_1}\cdot\ldots\cdot x_n^{j_n}
$$
of degree $d[f]:=\max\{j_1+\ldots+j_n\colon a_{j_1,\ldots, j_m}\ne0\}\le d$,
one has
$$
|\mathbb{E}\exp\{it f(X)\}| \le C(n,m)|a[f]\cdot t|^{-1/m}\bigl[\bigl|\ln|a[f]\cdot t|\bigr|^{d-m}+1\bigr]
$$
where $a[f]:=\max\limits_{j_1+\ldots+j_n=d[f]}|a_{j_1,\ldots, j_n}|$.
\end{corollary}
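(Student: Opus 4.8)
The plan is to deduce the Corollary directly from Theorem \ref{T-main} by relating the characteristic function of $f(X)$ to the $L^1$ modulus of continuity $\omega(\varrho_f, \cdot)$ of its density. Writing the characteristic function as the Fourier transform of the density, $\mathbb{E}\exp\{itf(X)\} = \int_\mathbb{R} e^{its}\varrho_f(s)\,ds$, I would exploit the half-period phase shift: since $e^{i\pi} = -1$, translating the integration variable by $\pi/t$ gives $\int e^{its}\varrho_f(s)\,ds = -\int e^{its}\varrho_f(s - \pi/t)\,ds$. Averaging the two expressions yields $2\,\mathbb{E}\exp\{itf(X)\} = \int e^{its}[\varrho_f(s) - \varrho_f(s-\pi/t)]\,ds$, and hence the pointwise bound $|\mathbb{E}\exp\{itf(X)\}| \le \tfrac12\,\omega(\varrho_f, \pi/|t|)$, valid for every $t\ne 0$.

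With this reduction in hand, I would simply invoke Theorem \ref{T-main} with $\varepsilon = \pi/|t|$. This gives
$$
|\mathbb{E}\exp\{itf(X)\}| \le \tfrac12 C(m,d)\,(\pi/(|t|\,a[f]))^{1/m}\bigl[|\ln(\pi/(|t|\,a[f]))|^{d-m}+1\bigr].
$$
The factor $(\pi/(|t|\,a[f]))^{1/m}$ equals $\pi^{1/m}|a[f]t|^{-1/m}$, which already produces the desired power decay, and the logarithmic term is handled by the elementary estimate $|\ln(\pi/|a[f]t|)| \le |\ln|a[f]t|| + \ln\pi$, expanded via the binomial inequality $(x+y)^{d-m} \le 2^{d-m}(x^{d-m}+y^{d-m})$. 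Absorbing all numerical constants and the extra powers of $\ln\pi$ into a single constant depending only on $m$ and $d$ then yields the asserted inequality.

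Two routine edge cases remain. For $|a[f]t|$ bounded away from zero (say $|a[f]t|\ge 1$) the computation above is exactly the claim, while for small $|t|$ the right-hand side $|a[f]t|^{-1/m}[\cdots]$ tends to infinity, so the bound holds trivially because $|\mathbb{E}\exp\{itf(X)\}|\le 1$ always. I would therefore split at a fixed threshold and combine the two regimes into the final constant.

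The argument contains no genuine obstacle: the phase-shift identity and the transfer from $B_{1,\infty}^{1/m}$-type regularity to Fourier decay are classical, so the only real labor is the bookkeeping of constants in the logarithmic factor. It is worth emphasizing, however, that since the constant $C(m,d)$ furnished by Theorem \ref{T-main} does not depend on the dimension $n$, the resulting estimate on the characteristic function is \emph{independent of the number of variables}, with exponent $\alpha = d-m$ on the logarithm; this is precisely the qualitative sharpening of the bound of \cite{GPU} advertised in the introduction.
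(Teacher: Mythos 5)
Your proof is correct, and it reaches the conclusion by a slightly different (though closely related) mechanism than the paper. The paper works with the dual modulus $\sigma(\varrho_f,\cdot)$ from Definition \ref{D2}: it observes that the bound $\sigma(\varrho_f,\varepsilon)\le C(m,d)(\varepsilon/a[f])^{1/m}[|\ln(\varepsilon/a[f])|^{d-m}+1]$ established in the proof of Theorem \ref{T-main} applies to any $\varphi$ with $\|\varphi'\|_\infty\le 1$ and $\|\varphi\|_\infty\le |t|^{-1}$, and then plugs in the specific test functions $\varphi(s)=\pm t^{-1}\cos(ts)$ and $\varphi(s)=\pm t^{-1}\sin(ts)$, whose derivatives produce $\mp\sin(ts)$ and $\pm\cos(ts)$ and hence the real and imaginary parts of the characteristic function. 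You instead use the classical half-period translation identity to get $|\mathbb{E}\exp\{itf(X)\}|\le\tfrac12\,\omega(\varrho_f,\pi/|t|)$ and then invoke the $\omega$-bound of Theorem \ref{T-main} directly; by Proposition \ref{prop2} the two moduli are equivalent, so the underlying cancellation being exploited is the same. Your route has the minor advantage of avoiding the technicality that $t^{-1}\cos(ts)$ is not compactly supported (the paper's choice formally falls outside $C_0^\infty(\mathbb{R})$ and requires a routine truncation that is left implicit), and it makes the transfer from $B_{1,\infty}^{1/m}$-type regularity to Fourier decay completely explicit; the paper's route avoids the change of scale from $|t|^{-1}$ to $\pi/|t|$ and keeps everything inside the $\sigma$-formalism already set up for the main theorem. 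Your treatment of the logarithmic factor and of the regime of small $|a[f]t|$ is sound, and your closing remark about $n$-independence matches the claim made in the introduction.
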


\begin{proof}
As we have already proved,
one has
$$
\sigma(\varrho_f, \varepsilon)\le
C(m,d) (\varepsilon/a[f])^{1/m}\bigl[|\ln(\varepsilon/a[f])|^{d-m} + 1\bigr]
$$
which means that for any function $\varphi\in C_0^\infty(\mathbb{R})$
with $\|\varphi'\|_\infty\le1$ and $\|\varphi\|_\infty\le |t|^{-1}$
one has
$$
\mathbb{E}\varphi'\bigl(f(X)\bigr)
\le
C(m,d) |a[f]\cdot t|^{-1/m}\bigl[\bigl|\ln|a[f]\cdot t|\bigr|^{d-m} + 1\bigr].
$$
We now take $\varphi(s)= \pm t^{-1}\cos(ts)$ and $\varphi(s)=\pm t^{-1}\sin(ts)$
and get the announced bound.
\end{proof}

Secondly, we use Theorem \ref{T-main}
to obtain bounds between the total variation
and the Kantorovich--Rubinstein distances.

Let $X,Y$ be two random variables.
The total variation distance
is defined by the equality
$$
d_{\rm TV}(X,Y)  := \sup\biggl\{\mathbb{E}\bigl[\varphi(X) - \varphi(Y)\bigr]\colon \varphi\in C_0^\infty(\mathbb{R}),
\ \|\varphi\|_\infty \le1 \biggr\}.
$$
The Kantorovich--Rubinstein distance is defined by the formula
$$
d_{\rm KR}(X,Y) := \sup\biggl\{\mathbb{E}\bigl[\varphi(X)-\varphi(Y)\bigr]\colon
\varphi\in C_0^\infty(\mathbb{R}),\ \|\varphi\|_\infty \le 1,\ \|\varphi'\|_\infty\le1\biggr\}.
$$
We recall (see \cite{B18})  that
convergence in Kantorovich--Rubinstein distance is equivalent to
 convergence in distribution (weak convergence of distributions).

In \cite[Lemma 3.1]{Kos-FCAA}, the following bound is proved.

\begin{proposition}\label{prop3}
Let $X$ and $Y$ be random variables.
Then for any $\varepsilon\in(0,1)$ one has
$$
d_{\rm TV}(X,Y) \le 6\max\{\sigma(\varrho_X, \varepsilon),\sigma(\varrho_Y, \varepsilon)\}
+ \varepsilon^{-1}d_{\rm KR}(X,Y)
$$
where $\varrho_X$ and $\varrho_Y$ are distribution densities of $X$ and $Y$, respectively.
\end{proposition}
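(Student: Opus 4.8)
The plan is to bound a single test function for $d_{\rm TV}$ by splitting it into a Lipschitz part, which is controlled by $d_{\rm KR}$, and a remainder whose integrals against $\varrho_X$ and $\varrho_Y$ are controlled by the moduli $\sigma(\varrho_X,\cdot)$ and $\sigma(\varrho_Y,\cdot)$. Fix $\varphi\in C_0^\infty(\mathbb{R})$ with $\|\varphi\|_\infty\le1$; it suffices to bound $\mathbb{E}[\varphi(X)-\varphi(Y)]$ by the right-hand side and then pass to the supremum over all such $\varphi$. I would regularize $\varphi$ at scale $\varepsilon$ by the sliding average $\varphi_\varepsilon(x):=\frac{1}{2\varepsilon}\int_{-\varepsilon}^{\varepsilon}\varphi(x+s)\,ds$, which satisfies $\|\varphi_\varepsilon\|_\infty\le\|\varphi\|_\infty\le1$ and, since $\varphi_\varepsilon'(x)=\frac{1}{2\varepsilon}[\varphi(x+\varepsilon)-\varphi(x-\varepsilon)]$, the Lipschitz bound $\|\varphi_\varepsilon'\|_\infty\le\varepsilon^{-1}$. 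The window of length $2\varepsilon$ is exactly what produces the coefficient $\varepsilon^{-1}$.

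I would then write $\mathbb{E}[\varphi(X)-\varphi(Y)]=\mathbb{E}[\varphi(X)-\varphi_\varepsilon(X)]+\mathbb{E}[\varphi_\varepsilon(X)-\varphi_\varepsilon(Y)]+\mathbb{E}[\varphi_\varepsilon(Y)-\varphi(Y)]$ and treat the three terms separately. For the middle term, note that $\varepsilon\varphi_\varepsilon$ is bounded by $\varepsilon\le1$ and is $1$-Lipschitz; since $\varepsilon\in(0,1)$ gives $\varepsilon^{-1}\ge1$, this yields $\mathbb{E}[\varphi_\varepsilon(X)-\varphi_\varepsilon(Y)]\le\varepsilon^{-1}d_{\rm KR}(X,Y)$.

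For the two outer terms, I would use $\mathbb{E}\varphi(X+s)=\int_{\mathbb{R}}\varphi(t)\varrho_X(t-s)\,dt$ to write $\mathbb{E}[\varphi(X)-\varphi_\varepsilon(X)]=\frac{1}{2\varepsilon}\int_{-\varepsilon}^{\varepsilon}\int_{\mathbb{R}}\varphi(t)\bigl[\varrho_X(t)-\varrho_X(t-s)\bigr]\,dt\,ds$, which is at most $\omega(\varrho_X,\varepsilon)$ by $\|\varphi\|_\infty\le1$ and the monotonicity of $\omega(\varrho_X,\cdot)$; the term for $Y$ is handled identically. Proposition \ref{prop2}, in the form $\omega(\varrho,\varepsilon)\le2\sigma(\varrho,\varepsilon/2)\le2\sigma(\varrho,\varepsilon)$, then converts these into $2\sigma(\varrho_X,\varepsilon)$ and $2\sigma(\varrho_Y,\varepsilon)$. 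Collecting the three estimates gives $\mathbb{E}[\varphi(X)-\varphi(Y)]\le4\max\{\sigma(\varrho_X,\varepsilon),\sigma(\varrho_Y,\varepsilon)\}+\varepsilon^{-1}d_{\rm KR}(X,Y)$, which is in fact a little stronger than the asserted bound with constant $6$; taking the supremum over $\varphi$ finishes the argument.

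The one genuinely delicate point is the middle term: a box-averaged $\varphi_\varepsilon$ is only Lipschitz, not $C_0^\infty$, so strictly speaking it is not admissible in the definition of $d_{\rm KR}$ as stated. I would resolve this with the standard fact that $d_{\rm KR}$ is unchanged when its test class is enlarged from $C_0^\infty$ functions to all bounded $1$-Lipschitz functions (mollify such a function by a smooth kernel at a scale $\delta\to0$; convolution increases neither the sup-norm nor the Lipschitz constant, and the values converge boundedly, so the two suprema agree). One could instead use a fixed smooth kernel from the outset, but this inflates the $d_{\rm KR}$-coefficient by the factor $\|\theta'\|_1>1$, so the box average together with the Lipschitz reformulation of $d_{\rm KR}$ is what yields the exact coefficient $\varepsilon^{-1}$. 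Beyond this technicality, the argument is a matter of tracking constants.
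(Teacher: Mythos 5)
Your argument is correct and in fact yields the constant $4$ in place of $6$; the paper gives no proof of Proposition \ref{prop3} itself (it is quoted from \cite{Kos-FCAA}), but your scheme --- regularize the total-variation test function at scale $\varepsilon$, charge the smoothing errors to $\omega(\varrho,\varepsilon)\le 2\sigma(\varrho,\varepsilon/2)\le 2\sigma(\varrho,\varepsilon)$ via Proposition \ref{prop2} and the monotonicity of $\sigma$, and charge the regularized part to $d_{\rm KR}$ --- is exactly the standard route to this inequality, and every step checks out. The only point worth correcting is the one you flag as delicate, which is actually vacuous: since $\varphi\in C_0^\infty(\mathbb{R})$, its box average $\varphi_\varepsilon=\varphi*\frac{1}{2\varepsilon}\mathbf{1}_{[-\varepsilon,\varepsilon]}$ is again in $C_0^\infty(\mathbb{R})$ (each derivative is $\varphi_\varepsilon^{(k)}=\varphi^{(k)}*\frac{1}{2\varepsilon}\mathbf{1}_{[-\varepsilon,\varepsilon]}$, which is continuous, and the support grows only by $\varepsilon$), so $\varepsilon\varphi_\varepsilon$ is already an admissible test function in the definition of $d_{\rm KR}$ and no enlargement of the test class is needed.
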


\begin{corollary}
Let $d,m\in \mathbb{N}$ and let $a\in \mathbb{R}$ be a positive number.
Let $f$ and $g$ be two polynomials of the form (\ref{rv})
and let $X:=(X_1,\ldots, X_n)$ be the standard Gaussian $n$-dimensional
random vector with independent coordinates.
Assume that $d[f]\le d$, $d[g]\le d$, $a[f]\ge a$, and $a[g]\ge a$.
Then
$$
d_{\rm TV}(f(X), g(X))\le C(m, d, a)[d_{\rm KR}(f(X), g(X))]^{\frac{1}{m+1}}
\bigl[|\ln d_{\rm KR}(f(X), g(X))|^{\frac{(d-m)m}{m+1}}+1\bigr].
$$
\end{corollary}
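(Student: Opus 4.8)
The plan is to combine Proposition~\ref{prop3} with the density regularity supplied by Theorem~\ref{T-main}, and then to optimise over the free parameter $\varepsilon$. Writing $\kappa:=d_{\rm KR}(f(X),g(X))$, Proposition~\ref{prop3} gives, for every $\varepsilon\in(0,1)$,
$$
d_{\rm TV}(f(X),g(X))\le 6\max\{\sigma(\varrho_{f(X)},\varepsilon),\sigma(\varrho_{g(X)},\varepsilon)\}+\varepsilon^{-1}\kappa .
$$
Thus the whole problem reduces to inserting a good bound for the two moduli $\sigma$ and then choosing $\varepsilon$ as a function of $\kappa$.

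First I would control the $\sigma$-terms. By Proposition~\ref{prop2} one has $\sigma(\varrho_{f(X)},\varepsilon)\le 6\,\omega(\varrho_{f(X)},\varepsilon)$, and Theorem~\ref{T-main} bounds the latter by $C(m,d)(\varepsilon/a[f])^{1/m}\bigl[|\ln(\varepsilon/a[f])|^{d-m}+1\bigr]$. To turn the dependence on $a[f]$ into a dependence on the lower bound $a$, I note that the function $\psi(t):=t^{1/m}\bigl[|\ln t|^{d-m}+1\bigr]$ is increasing on some interval $(0,t_0)$ with $t_0=t_0(m,d)$, as a direct differentiation shows. Since $a[f]\ge a$ forces $\varepsilon/a[f]\le\varepsilon/a$, for all $\varepsilon<a\,t_0$ monotonicity yields $\psi(\varepsilon/a[f])\le\psi(\varepsilon/a)$; absorbing the factors $a^{-1/m}$ and $|\ln a|$ into the constant gives
$$
\max\{\sigma(\varrho_{f(X)},\varepsilon),\sigma(\varrho_{g(X)},\varepsilon)\}\le C(m,d,a)\,\varepsilon^{1/m}\bigl[|\ln\varepsilon|^{d-m}+1\bigr],
$$
the bound for $g$ being identical because $a[g]\ge a$.

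The decisive step is the choice of $\varepsilon$. Substituting the last display into the estimate from Proposition~\ref{prop3} leaves me to minimise $C\varepsilon^{1/m}\bigl[|\ln\varepsilon|^{d-m}+1\bigr]+\varepsilon^{-1}\kappa$. The naive balance $\varepsilon\sim\kappa^{m/(m+1)}$ already produces the factor $\kappa^{1/(m+1)}$, but it retains the logarithmic power $d-m$, which is weaker than the claimed exponent $\tfrac{(d-m)m}{m+1}$. To recover the sharp power I would instead take
$$
\varepsilon=\kappa^{m/(m+1)}\,|\ln\kappa|^{-\frac{(d-m)m}{m+1}},
$$
valid for $\kappa$ so small (depending on $m,d,a$) that $\varepsilon\in(0,1)$ and $\varepsilon<a\,t_0$. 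For this $\varepsilon$ one checks $|\ln\varepsilon|\le C(m,d)|\ln\kappa|$, so the first term is of order $\kappa^{1/(m+1)}|\ln\kappa|^{-\frac{d-m}{m+1}}\cdot|\ln\kappa|^{d-m}=\kappa^{1/(m+1)}|\ln\kappa|^{\frac{(d-m)m}{m+1}}$, while the second term $\varepsilon^{-1}\kappa$ equals exactly $\kappa^{1/(m+1)}|\ln\kappa|^{\frac{(d-m)m}{m+1}}$, so both match the target. Finally, for $\kappa$ bounded away from $0$ I would invoke the trivial bound $d_{\rm TV}\le 2$, which is dominated by the right-hand side after enlarging $C(m,d,a)$.

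I expect the main obstacle to be precisely this last optimisation: the logarithmic correction $|\ln\kappa|^{-\frac{(d-m)m}{m+1}}$ in the choice of $\varepsilon$ is what sharpens the exponent of the logarithm from the crude $d-m$ down to $\tfrac{(d-m)m}{m+1}$, and keeping track of the $|\ln\varepsilon|$ versus $|\ln\kappa|$ factors is the only genuinely delicate point. Everything else is a direct assembly of Propositions~\ref{prop2} and~\ref{prop3} with Theorem~\ref{T-main}.
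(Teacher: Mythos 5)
Your proposal is correct and follows essentially the same route as the paper: bound both moduli $\sigma(\varrho_{f(X)},\varepsilon)$ and $\sigma(\varrho_{g(X)},\varepsilon)$ by $C(m,d,a)\,\varepsilon^{1/m}\bigl[|\ln\varepsilon|^{d-m}+1\bigr]$ via Theorem~\ref{T-main}, feed this into Proposition~\ref{prop3}, and choose $\varepsilon=\kappa^{m/(m+1)}|\ln\kappa|^{-(d-m)m/(m+1)}$ (the paper uses the same choice with $\kappa$ replaced by $\tfrac13\kappa$). Your explicit justification of the $a[f]\ge a$ reduction by monotonicity of $t\mapsto t^{1/m}[|\ln t|^{d-m}+1]$ near zero is a detail the paper leaves implicit, but the argument is the same.
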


\begin{proof}
We have
$$
\sigma(\varrho_f, \varepsilon)\le
C(m,d) (\varepsilon/a[f])^{1/m}\bigl[|\ln(\varepsilon/a[f])|^{d-m} + 1\bigr]
\le
C_1(m, d, a)  \varepsilon^{1/m}\bigl[|\ln\varepsilon|^{d-m} + 1\bigr]
$$
and the same bound is true for $\sigma(\varrho_g,\cdot)$.
Thus, by Proposition \ref{prop3}, one has
$$
d_{\rm TV}(f(X), g(X))
\le
6C_1(m,d,a)\varepsilon^{1/m}\bigl[|\ln\varepsilon|^{d-m} + 1\bigr]
+ \varepsilon^{-1}d_{\rm KR}(f(X),g(X))
$$
for any $\varepsilon\in(0,1)$.
Since $d_{\rm KR}(f(X), g(X))\le 2$,
we can take
$$
\varepsilon =
\bigl[\tfrac{1}{3}d_{\rm KR}(f(X), g(X))\bigr]^{\frac{m}{m+1}}
\bigl|\ln\bigl(\tfrac{1}{3}d_{\rm KR}(f(X), g(X))\bigr)\bigr|^{\frac{(m-d)m}{m+1}},
$$
which implies the announced bound.
\end{proof}

\section*{Acknowledgment}

The author is a Young Russian Mathematics award winner and would like to thank its sponsors and jury.

This reaserach is supported by the Russian Science Foundation Grant 17-11-01058 (at Lomo\-no\-sov Moscow State University).

\end{document}